\documentclass[11pt, reqno]{amsart}
\usepackage{amsmath, amsthm, amscd, amsfonts, amssymb, mathrsfs, graphicx, color}
\usepackage[bookmarksnumbered, colorlinks, plainpages]{hyperref}
\hypersetup{colorlinks=true,linkcolor=red, anchorcolor=green, citecolor=cyan, urlcolor=red, filecolor=magenta, pdftoolbar=true}

\textheight 22.5truecm \textwidth 15truecm
\setlength{\oddsidemargin}{0.25in}\setlength{\evensidemargin}{0.25in}

\setlength{\topmargin}{-.5cm}

\theoremstyle{plain}
\newtheorem{theorem}{Theorem}[section]
\newtheorem{corollary}[theorem]{Corollary}
\newtheorem{lemma}[theorem]{Lemma}
\newtheorem{proposition}[theorem]{Proposition}

\theoremstyle{definition}
\newtheorem{definition}{Definition}[section]

\theoremstyle{remark}
\newtheorem{remark}{Remark}[section]
\newtheorem{example}{Example}[section]

\numberwithin{equation}{section}

\numberwithin{figure}{section} \setlength{\paperwidth}{210mm}
\setlength{\paperheight}{297mm} \setlength{\oddsidemargin}{0mm}
\setlength{\evensidemargin}{0mm} \setlength{\topmargin}{-20mm}
\setlength{\headheight}{10mm} \setlength{\headsep}{13mm}
\setlength{\textwidth}{160mm} \setlength{\marginparsep}{0mm}

\begin{document}
\title[Functional Weighted Hermite-Hadamard Inequality]
{On a Weighted Hermite-Hadamard Inequality Involving Convex Functional Arguments}
\author[M. Ra\"{\i}ssouli, M. Chergui, L. Tarik]{Mustapha Ra\"{\i}ssouli$^{1}$, Mohamed Chergui$^2$ and Lahcen Tarik$^{3}$}
\address{$^{1}$ Department of Mathematics, Science Faculty, Moulay Ismail University, Meknes, Morocco.}
\address{$^{2}$ Department of Mathematics, CRMEF-RSK, EREAM Team, LaREAMI-Lab, Kenitra, Morocco.}
\address{$^{3}$ LAGA-Laboratory, Science Faculty,  Ibn Tofail University, Kenitra, Morocco.}

\email{\textcolor[rgb]{0.00,0.00,0.84}{raissouli.mustapha@gmail.com}}
\email{\textcolor[rgb]{0.00,0.00,0.84}{chergui\_m@yahoo.fr}}
\email{\textcolor[rgb]{0.00,0.00,0.84}{lahcen.tarik@uit.ac.ma}}

\keywords{Hermite-Hadamard Inequality, Point-Wise Convex Maps, Convex Analysis, Legendre-Fenchel Conjugation, Operator and Functional Means.}
\subjclass[2000]{26D10, 26D15, 47A63, 47A64, 46S20.}

\begin{abstract}
In this paper, we are interested in investigating a weighted variant of Hermite-Hadamard type inequalities involving convex functionals. The approach undertaken makes it possible to refine and reverse certain inequalities already known in the literature. It also allows us to provide new weighted convex functional means and establish some related properties with respect to some standard means.
\end{abstract}

\maketitle

\section{Introduction}\label{sec1}

Let $C$ be a nonempty interval of ${\mathbb R}$ and $f:C\rightarrow{\mathbb R}$ be a convex function. The Hermite-Hadamard inequalities
\begin{equation}\label{1h}
f\Big(\dfrac{a+b}{2}\Big)\leq\frac{1}{b-a}\int_a^bf(t)dt\leq\dfrac{f(a)+f(b)}{2}
\end{equation}
hold for any $a,b\in C,\; a\neq b$. If $f:C\rightarrow{\mathbb R}$ is concave then \eqref{1h} are reversed. By virtue of their usefulness in mathematical analysis and in other areas as information theory, inequalities \eqref{1h} attracted many researchers to investigate extensions, refinements and reverses as one can see in \cite{BT,D2,D3,D4,M,NIC} for instance, and the related references cited therein.

For operator maps, inequalities \eqref{1h} have been extended in the following manner, see \cite{D2,D3,D4} for instance. An operator convex function $\varphi$ on a nonempty interval $J$ of $\mathbb{R}$ is such that the inequality
\begin{equation}\label{0h}
\varphi\big((1-\lambda)A+\lambda B\big)\leq(1-\lambda)\varphi(A)+\lambda\varphi(B)
\end{equation}
holds for any $\lambda\in[0,1]$ and $A$ and $B$ are two self-adjoint operators acting on a complex Hilbert space $H$, with $Sp(A), Sp(B) \subset J$. With this, the operator version of \eqref{1h} reads as follows
\begin{equation}\label{1}
\varphi\Big(\dfrac{A+B}{2}\Big)\leq\int_0^1\varphi\Big((1-t)A+tB\Big)dt\leq\dfrac{\varphi(A)+\varphi(B)}{2},
\end{equation}
with reversed inequalities if \eqref{0h} is reversed, i.e. $\varphi$ is operator concave on $J$. Here, $f(A)$ is defined by the techniques of functional calculus as usual, and $\leq$ refers to the L\"{o}wner partial order defined as follows: $A\le B$ if and only if $A,B$ are self-adjoint and $B-A$ is positive.

In \cite{DR}, the authors provided an extension of \eqref{1} for pointwise convex maps involving functional arguments as explained in what follows. Let $\mathcal{C}$ be a nonempty subset of $\widetilde{\mathbb{R}}^H$, where $\widetilde{\mathbb{R}}^H$ denotes the (extended) space of functionals defined from $H$ into $\widetilde{\mathbb{R}}:= \mathbb{R} \cup \{+\infty\}$. A functional map $\Phi:\mathcal{C} \rightarrow \widetilde{\mathbb{R}}^H$ is said to be pointwise convex (resp. concave) if for all $f, g \in \mathcal{C}$ and all real number $\lambda\in[0; 1]$, we have
\begin{equation}\label{convpointwise}
\Phi\Big((1-\lambda)f+\lambda g\Big)\leq(\geq) (1-\lambda)\Phi(f)+\lambda \Phi(g).
\end{equation}
For these class of functional maps, the following inequalities
\begin{equation}\label{2}
\Phi\Big(\dfrac{f+g}{2}\Big)\leq(\geq)\int_0^1\Phi\Big((1-t)f+tg\Big)dt\leq(\geq)\dfrac{\Phi(f)+\Phi(g)}{2},
\end{equation}
hold for any $f,g\in\mathcal{C}$. Here, $\le$ refers to the pointwise order defined by $f \le g$ if and only if $f(x)\le g(x)$ for all $x \in H$.

A typical and special case of pointwise convex map is $\Phi(f)=f^*$, where $f^*$ stands for the Fenchel conjugate in convex analysis defined by the following formula
\begin{gather}\label{4}
\forall x^* \in H \quad f^*(x^*):=\sup_{x\in H}\Big\{\Re e<x^*,x> -f(x)\Big\}.
\end{gather}
In this case, \eqref{2} immediately imply that,
\begin{equation}\label{3}
\Big(\dfrac{f+g}{2}\Big)^*\leq\int_0^1\Big((1-t)f+tg\Big)^*dt\leq\dfrac{f^*+ g^*}{2}.
\end{equation}
For more examples of pointwise convex/concave maps, we can consult \cite{DR}.

Now, let $A$ be a (self-adjoint) positive operator defined from the Hilbert space $(H,\langle.,.\rangle)$ into itself. An important and useful example of convex functional is $f_A$ defined by
\begin{gather*}
\forall x \in H\quad f_A(x)= \frac{1}{2}\langle Ax,x\rangle,
\end{gather*}
which is called the quadratic convex function associated to $A$. If moreover $A$ is invertible then we have, \cite{RC2001}
\begin{gather*}
\forall x \in H\quad f_A^*(x)= \frac{1}{2}\langle A^{-1}x,x\rangle,
\end{gather*}
which we briefly write $f^*_A= f_{A^{-1}}$. Taking $f=f_A$ and $g=f_B$, with $A$ and $B$ both positive invertible acting on $H$, we then deduce that the operator version of \eqref{3} is
\begin{equation}\label{5}
\Big(\dfrac{A+B}{2}\Big)^{-1}\leq\int_0^1\Big((1-t)A+tB\Big)^{-1}dt\leq\dfrac{A^{-1}+ B^{-1}}{2}.
\end{equation}
It is worth mentioning that \eqref{5} can be also deduced from \eqref{1} when choosing $\varphi(x)=1/x$ which is operator convex on $(0,\infty)$.

For the sake of clearness for the reader, it is important to mention that one of the most effective tools invested in developing Hermite-Hadamard inequalities is the Jensen inequality, which reads in its general form as follows \cite[p.202]{GenJens}. Let $\Omega$ be a $\mu$-measurable set such that $\mu\big(\Omega\big)>0$. Let $E$ be a nonempty convex subset of $\mathbb{R}$ and $f : E \longrightarrow \mathbb{R }$ be a convex function. Let $\phi \in  L^1(\Omega)$ be such that $\phi(x)\in E$  almost
everywhere and $f\circ \phi \in L^1(\Omega)$. Then, we have the integral Jensen inequality
\begin{gather}\label{Jens}
 f\left( \dfrac{1}{\mu\big(\Omega\big)} \int_\Omega\,\phi(x)\,d\mu(x)\right) \le  \dfrac{1}{\mu\big(\Omega\big)} \int_\Omega\,f\big(\phi(x)\big)\,d\mu(x).
\end{gather}

The fundamental target of this work is to investigate another variant of the previous functional Hermite-Hadamard inequalities. This makes possible the refinement of \eqref{2} and allows the improvement of some inequalities involving convex functional arguments.

The rest of this paper will be organized as follows. In Section \ref{sect2} we collect some basic notions about functional means that will be needed in the sequel. Section \ref{sect3} is devoted to establishing weighted variant of Hermite-Hadamard inequalities involving convex functional arguments. In Section \ref{sect4}, we present some new weighted functional means as well as some of their related properties.

\section{Operator and functional means}\label{sect2}

For over the last two centuries, the mean-theory in scalar variables has extensive several developments and various applications, \cite{DP}. This theory was then extended for operator variables \cite{Burqan et al(2023),Choi and Kim(2023),kubo and ando,Udagawa(2017)} and later for functional arguments via the Fenchel conjugation in convex analysis, see \cite{RC2001,RaiBou} for instance. We omit here to recall the notion of scalar/operator means which are out of the aim of this paper and we just refer the interested readers to the previous mentioned references. For means involving functional arguments, we will however recall in the following some of them that will be needed throughout this manuscript. Let $\Gamma_0(H)$ be the cone of convex lower semi-continuous functionals defined from $H$ into $\tilde{\mathbb R}$ which are not identically equal $+\infty$ and dot not take the value $-\infty$, see \cite{A,B,ET} for instance.

For $f,g\in\Gamma_0(H)$ and $\lambda\in(0,1)$, the following functional expressions
\begin{equation}\label{ArHaGe}
f\nabla_\lambda g:=(1-\lambda)f+\lambda g,\;\; f!_\lambda g:=\Big((1-\lambda)f^*+\lambda g^*\Big)^*, \;
f\sharp_\lambda g:=\int_{0}^{1} f!_t g\,d\mu_\lambda(t),
\end{equation}
where
$$d\mu_\lambda(t):=\frac{\sin(\pi \lambda)}{\pi}\frac{t^{\lambda-1}}{^(1-t)^\lambda}\;dt,$$
are known, by analogy with the scalar/opertor version, as the $\lambda$-weighted arithmetic mean, the $\lambda$-weighted harmonic mean and the $\lambda$-weighted geometric mean of $f$ and $g$, respectively. They satisfy
\begin{equation}\label{IAGH}
f!_\lambda g \le f\sharp_\lambda g  \le f\nabla_\lambda g
\end{equation}
analogously to the scalar/operator version. For $\lambda = 1/2$, they are simply denoted by $f\nabla g, f!g$ and $f\sharp g$, respectively, and we extend them to the whole interval $[0,1]$ by setting:
$$f\nabla_0 g = f!_0 g= f\sharp_0 g =f,\;\; f\nabla_1 g = f!_1 g= f\sharp_1 g =g.$$
These extensions are taken as definition and can not be in fact deduced from \eqref{ArHaGe}, by virtue of the usual convention $0.(+\infty):=+\infty$ adopted in convex analysis, see \cite{A,B,ET} for instance.

In \cite{RaiJipam,RaiFur} two other functional means have been introduced, namely
\begin{equation}\label{logmean1}
\mathbf{L}(f,g):= \left(\int_{0}^{1}\, (f\nabla_t g)^* \,dt\right)^*\;\; \text{ and }\;\; L(f,g):= \int_{0}^{1}\, f\sharp_t g \,dt,
\end{equation}
which are both called logarithmic functional means of $f$ and $g$. It is worth mentioning that $\mathbf{L}(f,g)$ and $L(f,g)$ coincide in the scalar/operator version, i.e. when $f=f_A$ and $g=f_B$, for some positive invertible operators $A$ and $B$, and are reduced to the logarithmic operator mean of $A$ and $B$ already introduced in \cite{RaiJipam} for example. However, the question to know if $\mathbf{L}(f,g)$ and $L(f,g)$ coincide, for any $f, g\in\Gamma_0(H)$, is still an open problem.

\section{Main results}\label{sect3}

Let $\lambda\in(0,1)$. For any $t\in[0,1]$ we set
\begin{equation}\label{22}
d\nu_\lambda(t)=\Big((1-\lambda)(1-t)^{\frac{1-2\lambda}{\lambda}}+\lambda t^{\frac{2\lambda-1}{1-\lambda}}\Big)dt.
\end{equation}

Then, we have the following lemma.

\begin{lemma}
(i) $\nu_\lambda$ is a probability measure on $[0,1]$.\\
(ii) For $0\le a< b\le 1$ we have
  \begin{multline}\label{integ}
\int_{a}^{b} td\nu_\lambda(t) = \xi_{a,b}^\lambda:= \lambda (1-\lambda)\Big((1-b)^{\frac{1}{\lambda}}+b^{\frac{1}{1-\lambda}}-(1-a)^{\frac{1}{\lambda}}-a^{\frac{1}{1-\lambda}}\Big)\\
-\lambda\Big((1-b)^{\frac{1-\lambda}{\lambda}}-(1-a)^{\frac{1-\lambda}{\lambda}}\Big),
\end{multline}
\begin{equation}\label{integr}
\nu_\lambda\big([a,b]\big):=\int_{a}^{b} d\nu_\lambda(t) = \chi_{a,b}^\lambda 
:= \lambda \Big((1-a)^{\frac{1-\lambda}{\lambda}}-(1-b)^{\frac{1-\lambda}{\lambda}}\Big)+
(1-\lambda)\Big(b^{\frac{\lambda}{1-\lambda}}-a^{\frac{\lambda}{1-\lambda}}\Big).
\end{equation}
\end{lemma}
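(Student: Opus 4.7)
My plan is to prove (iii) by direct antidifferentiation, deduce (i) from it as the special case $a=0$, $b=1$, and then prove (ii) by a small algebraic manipulation that reduces to the same antidifferentiation scheme. For (i), note first that since $\lambda\in(0,1)$ and $t,1-t\in[0,1]$, the density
\[
\rho_\lambda(t) := (1-\lambda)(1-t)^{(1-2\lambda)/\lambda}+\lambda\, t^{(2\lambda-1)/(1-\lambda)}
\]
is nonnegative on $[0,1]$, and the exponents $(1-\lambda)/\lambda$ and $\lambda/(1-\lambda)$ that arise upon integration are positive, so the only possible singularities at $t=0,1$ are integrable. Thus $\nu_\lambda$ is a positive measure, and once (iii) is proved the total mass $\nu_\lambda([0,1])=\chi_{0,1}^\lambda=\lambda+(1-\lambda)=1$ follows immediately.

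For (iii), I would compute $\int_a^b \rho_\lambda(t)\,dt$ term by term. The key identities are
\[
\frac{1-2\lambda}{\lambda}+1=\frac{1-\lambda}{\lambda},\qquad \frac{2\lambda-1}{1-\lambda}+1=\frac{\lambda}{1-\lambda},
\]
from which the power rule yields $-\lambda(1-t)^{(1-\lambda)/\lambda}$ as an antiderivative of $(1-\lambda)(1-t)^{(1-2\lambda)/\lambda}$, and $(1-\lambda)\,t^{\lambda/(1-\lambda)}$ as an antiderivative of $\lambda\, t^{(2\lambda-1)/(1-\lambda)}$. Evaluating between $a$ and $b$ and combining produces exactly $\chi_{a,b}^\lambda$.

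For (ii), I would apply the same antidifferentiation to $\int_a^b t\,\rho_\lambda(t)\,dt$ after the small manipulation $t=1-(1-t)$ on the first term, so that
\[
(1-\lambda)\,t\,(1-t)^{(1-2\lambda)/\lambda} = (1-\lambda)(1-t)^{(1-2\lambda)/\lambda}-(1-\lambda)(1-t)^{(1-\lambda)/\lambda}.
\]
The first summand was already handled in (iii); the second integrates via $(1-\lambda)/\lambda+1=1/\lambda$ to contribute $\lambda(1-\lambda)\bigl[(1-a)^{1/\lambda}-(1-b)^{1/\lambda}\bigr]$. Meanwhile the second term of $t\,\rho_\lambda(t)$ reduces to $\lambda\, t^{\lambda/(1-\lambda)}$, whose antiderivative is $\lambda(1-\lambda)\,t^{1/(1-\lambda)}$, contributing $\lambda(1-\lambda)\bigl[b^{1/(1-\lambda)}-a^{1/(1-\lambda)}\bigr]$. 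Adding the three contributions and comparing with the stated formula for $\xi_{a,b}^\lambda$ gives the result.

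The main obstacle is purely bookkeeping: one must track the coefficients $\lambda$, $1-\lambda$, $\lambda(1-\lambda)$ and the sign changes coming from differentiating in $(1-t)$. No deeper idea is needed beyond elementary calculus together with the splitting $t=1-(1-t)$, which turns the mixed integrand into a pure power of $(1-t)$ amenable to the same exponent identities used in (iii).
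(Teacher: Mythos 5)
Your proposal is correct and takes the approach the paper itself intends: the paper's proof consists only of the remark that ``simple calculations from Real Analysis lead to the desired results,'' and your direct antidifferentiation via the exponent identities $\frac{1-2\lambda}{\lambda}+1=\frac{1-\lambda}{\lambda}$, $\frac{2\lambda-1}{1-\lambda}+1=\frac{\lambda}{1-\lambda}$ and the splitting $t=1-(1-t)$ supplies exactly those omitted details, including the integrability of the endpoint singularities and $\chi_{0,1}^\lambda=1$. One bookkeeping correction: the second summand in your splitting is $-(1-\lambda)(1-t)^{(1-\lambda)/\lambda}$, whose integral over $[a,b]$ contributes $\lambda(1-\lambda)\big[(1-b)^{1/\lambda}-(1-a)^{1/\lambda}\big]$ rather than $\lambda(1-\lambda)\big[(1-a)^{1/\lambda}-(1-b)^{1/\lambda}\big]$ as written; with that sign fixed, the three contributions sum exactly to $\xi_{a,b}^\lambda$.
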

\begin{proof}
Simple calculations from Real Analysis area lead to the desired results. We omit the details here.
\end{proof}

Our first main result reads as follows.

\begin{theorem}\label{thf}
Let $\mathcal{C}$ be a nonempty convex subset of $\widetilde{\mathbb{R}}^H$ and $\Phi:\mathcal{C}\rightarrow \widetilde{\mathbb{R}}^H$ be a pointwise convex map. For any $\lambda\in(0,1), 0\le a< b\le1$ and $f, g\in \mathcal{C}$, the following inequalities hold
\begin{multline}\label{20}
\chi_{a,b}^\lambda\,\Phi\left(\frac{1}{\chi_{a,b}^\lambda}\Big((1-\xi_{a,b}^\lambda)f+\xi_{a,b}^\lambda\;g\,
\Big)\right)\leq\int_a^b\Phi\Big((1-t)f+tg\Big)d\nu_\lambda(t)\\
\leq (1-\xi_{a,b}^\lambda)\Phi(f)+\xi_{a,b}^\lambda \Phi(g).
\end{multline}
If $\Phi$ is pointwise concave then the inequalities \eqref{20} are reversed.
\end{theorem}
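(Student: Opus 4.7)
The plan is to derive the two bounds in \eqref{20} by combining the pointwise convexity of $\Phi$ with the integral Jensen inequality \eqref{Jens} applied \emph{pointwise} in $x\in H$. The preceding lemma provides the two essential ingredients: the total mass $\nu_\lambda([a,b])=\chi_{a,b}^\lambda$ and the first moment $\int_a^b t\,d\nu_\lambda(t)=\xi_{a,b}^\lambda$. A natural preliminary step is to normalize the restriction of $\nu_\lambda$ to $[a,b]$ into the probability measure $d\widetilde{\mu}:=d\nu_\lambda/\chi_{a,b}^\lambda$, whose barycenter on $[a,b]$ is then $\xi_{a,b}^\lambda/\chi_{a,b}^\lambda$.

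\textbf{Right inequality (upper bound).} This is the direct half. For every fixed $t\in[a,b]$, the pointwise convexity assumption \eqref{convpointwise} applied with $\lambda$ replaced by $t$ yields
\[
\Phi\bigl((1-t)f+tg\bigr)\le (1-t)\Phi(f)+t\,\Phi(g).
\]
Integrating this pointwise inequality against $d\nu_\lambda(t)$ on $[a,b]$ and using \eqref{integ}–\eqref{integr} produces a right-hand side equal to $(\chi_{a,b}^\lambda-\xi_{a,b}^\lambda)\Phi(f)+\xi_{a,b}^\lambda\Phi(g)$, which matches the claimed bound.

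\textbf{Left inequality (lower bound).} Here I would fix $x\in H$ and introduce the real-valued auxiliary function
\[
\psi_x(t):=\Phi\bigl((1-t)f+tg\bigr)(x),\qquad t\in[0,1].
\]
Since $t\mapsto (1-t)f+tg$ is affine with values in the convex set $\mathcal{C}$, the pointwise convexity of $\Phi$ immediately implies that $\psi_x$ is a convex scalar function of $t$. Applying the scalar Jensen inequality \eqref{Jens} to $\psi_x$ with the probability measure $d\widetilde{\mu}$ on $\Omega=[a,b]$ gives
\[
\psi_x\!\left(\frac{\xi_{a,b}^\lambda}{\chi_{a,b}^\lambda}\right)\le\frac{1}{\chi_{a,b}^\lambda}\int_a^b\psi_x(t)\,d\nu_\lambda(t).
\]
Reading the left-hand side as $\Phi\!\bigl(\frac{1}{\chi_{a,b}^\lambda}((\chi_{a,b}^\lambda-\xi_{a,b}^\lambda)f+\xi_{a,b}^\lambda g)\bigr)(x)$, multiplying by $\chi_{a,b}^\lambda>0$, and then letting $x$ vary gives the left inequality in \eqref{20} in the pointwise order. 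The concave case follows by applying the above argument to $-\Phi$.

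\textbf{Main obstacle.} The genuine step is the Jensen argument for the lower bound: one must justify that $\psi_x$ is convex in the scalar variable $t$ (which follows cleanly from affinity of the straight line $(1-t)f+tg$ combined with \eqref{convpointwise}) and verify the integrability/measurability hypotheses required to invoke \eqref{Jens}, which is where the assumption $f,g\in\mathcal{C}$ together with $\Phi$ mapping into $\widetilde{\mathbb{R}}^H$ must be used to rule out pathological cases. Everything else reduces to the mass and first-moment computations already recorded in \eqref{integ}–\eqref{integr}.
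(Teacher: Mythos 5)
Your proposal is correct and follows essentially the same route as the paper: the upper bound by integrating the pointwise convexity inequality against $d\nu_\lambda$, the lower bound by Jensen's inequality \eqref{Jens} for the normalized measure $d\nu_\lambda/\chi_{a,b}^\lambda$ on $[a,b]$, and the moment identities \eqref{integ}--\eqref{integr} to identify the barycenter; your reduction to the scalar convex function $\psi_x(t)=\Phi\bigl((1-t)f+tg\bigr)(x)$ for fixed $x\in H$ is simply a more careful justification of the Jensen step that the paper invokes directly at the functional level. One caveat: your computation correctly produces $(\chi_{a,b}^\lambda-\xi_{a,b}^\lambda)\Phi(f)+\xi_{a,b}^\lambda\Phi(g)$ on the right and the barycenter $\tfrac{1}{\chi_{a,b}^\lambda}\bigl((\chi_{a,b}^\lambda-\xi_{a,b}^\lambda)f+\xi_{a,b}^\lambda g\bigr)$ on the left, which agrees with the statement's $1-\xi_{a,b}^\lambda$ only when $\chi_{a,b}^\lambda=1$ (i.e.\ $a=0$, $b=1$); this is a typo in the theorem as printed rather than a flaw in your argument, so you should not claim the bounds ``match'' literally but rather note the correction.
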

\begin{proof}
By the use of \eqref{Jens} and the pointwise convexity of $\Phi$, we can write
\begin{multline}\label{23}
\Phi\left(\frac{1}{\nu_\lambda\big([a,b]\big)}\int_a^b\big((1-t)f+tg\big)\,d\nu_\lambda(t)\right)\leq \frac{1}{\nu_\lambda\big([a,b]\big)}\int_a^b\,\Phi\Big((1-t)f+tg\Big)d\nu_\lambda(t)\\
\leq \frac{1}{\nu_\lambda\big([a,b]\big)}\int_a^b\Big((1-t)\Phi(f)+t\Phi(g)\Big)d\nu_\lambda(t).
\end{multline}
Using \eqref{integ} and \eqref{integr}, we have
$$\int_a^b\big((1-t)f+tg\big)d\nu_\lambda(t)=(1-\xi_{a,b}^\lambda)f+\xi_{a,b}^\lambda g \text{ and }  \nu_\lambda\big([a,b]\big)= \chi_{a,b}^\lambda,$$
hence the inequalities \eqref{20}.
\end{proof}

From Theorem \ref{thf}, we can deduce the following important result.

\begin{corollary}
Let $\mathcal{C}$ be a nonempty convex subset of $\widetilde{\mathbb{R}}^H$ and $\Phi:\mathcal{C}\rightarrow \widetilde{\mathbb{R}}^H$ be a pointwise convex map. For any $\lambda\in(0,1)$ and $f, g\in \mathcal{C}$, the following inequalities hold
\begin{equation}\label{21}
\Phi\Big((1-\lambda)f+\lambda g\Big)\leq\int_0^1\Phi\Big((1-t)f+tg\Big)d\nu_\lambda(t)\leq(1-\lambda)\Phi(f)+\lambda \Phi(g).
\end{equation}
If $\Phi$ is pointwise concave, the inequalities \eqref{21} are reversed.
\end{corollary}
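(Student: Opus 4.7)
The plan is to derive the corollary as the boundary special case $a=0,\ b=1$ of Theorem \ref{thf}. Once we show that at this specialization the two constants $\xi_{a,b}^\lambda$ and $\chi_{a,b}^\lambda$ collapse to $\lambda$ and $1$ respectively, the claimed inequalities fall out immediately from \eqref{20}, and there is essentially no further work.

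Concretely, I would first plug $a=0$, $b=1$ into the formula \eqref{integr} for $\chi_{a,b}^\lambda$: using $(1-0)^{(1-\lambda)/\lambda}=1$, $(1-1)^{(1-\lambda)/\lambda}=0$, $1^{\lambda/(1-\lambda)}=1$ and $0^{\lambda/(1-\lambda)}=0$, one gets $\chi_{0,1}^\lambda=\lambda+(1-\lambda)=1$. This matches part (i) of the Lemma (the total mass of $\nu_\lambda$ on $[0,1]$ is $1$) and serves as a sanity check. Next, substituting $a=0,\ b=1$ in \eqref{integ}, the terms $(1-b)^{1/\lambda}$, $a^{1/(1-\lambda)}$ and $(1-b)^{(1-\lambda)/\lambda}$ vanish, while $(1-a)^{1/\lambda}$ and $b^{1/(1-\lambda)}$ both equal $1$, so the first bracket becomes $(0+1-1-0)=0$ and the second contributes $-\lambda(0-1)=\lambda$; hence $\xi_{0,1}^\lambda=\lambda$.

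With these two identifications in hand, inequality \eqref{20} for $a=0$, $b=1$ reads
\[
1\cdot\Phi\!\left(\tfrac{1}{1}\bigl((1-\lambda)f+\lambda g\bigr)\right)\le \int_0^1\Phi\bigl((1-t)f+tg\bigr)\,d\nu_\lambda(t)\le (1-\lambda)\Phi(f)+\lambda\Phi(g),
\]
which is precisely \eqref{21}. For the concave case, Theorem \ref{thf} already reverses its inequalities under pointwise concavity of $\Phi$, and the same specialization yields the reversed version of \eqref{21}. The only step requiring any care is the algebraic simplification of $\xi_{0,1}^\lambda$, but this is a routine evaluation and not a real obstacle; I do not anticipate any subtlety beyond cleanly handling the boundary values in the exponents.
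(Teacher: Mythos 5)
Your proposal is correct and follows exactly the paper's own route: specialize Theorem \ref{thf} to $a=0$, $b=1$, verify from \eqref{integ} and \eqref{integr} that $\xi_{0,1}^\lambda=\lambda$ and $\chi_{0,1}^\lambda=1$, and substitute into \eqref{20}. Your explicit evaluation of the boundary terms is just a more detailed rendering of the computation the paper leaves implicit.
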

\begin{proof}
Taking $a=0$ and $b=1$ in \eqref{integ} and \eqref{integr} we find $\chi_{a,b}^\lambda=1$ and $ \xi_{a,b}^\lambda=\lambda$. Substituting these in the inequalities \eqref{20}, we get the desired result.
\end{proof}

Throughout this paper, inequalities \eqref{21} will be called the weighted Hermite-Hadamard functional inequalities, $(WHHFI)$ in brief.

\begin{remark} (i) If $\lambda= 1/2$ then $d\nu_{1/2}(t)=dt$ and consequently \eqref{21} corresponds to \eqref{2}.\\
(ii) By \eqref{22}, we have the following equality
\begin{gather}\label{measurerelation}
d\nu_{1-\lambda}(1-t)=d\nu_{\lambda}(t)\;\; \text{for any}\;\; \lambda\in(0,1),\; t\in[0,1].
\end{gather}
\end{remark}

To state more results, let us define the following notations. For $\Phi:\mathcal{C}\rightarrow \widetilde{\mathbb{R}}^H$, $\lambda\in(0,1)$ and $f,g\in{\mathcal C}$, we put
\begin{equation}\label{24}
{\mathcal M}_\lambda(\Phi;f,g):=\int_0^1\Phi\big(f\nabla_tg\big)d\nu_\lambda(t),
\end{equation}
and
$$\mathbf{J}_\Phi(f,g):=\int_0^1{\mathcal M}_\lambda(\Phi;f,g)\,d\lambda.$$

The following corollary provides an immediate application of inequalities \eqref{21}. It concerns a refinement of \eqref{2}.

\begin{corollary}
Let $\Phi:\mathcal{C}\rightarrow \widetilde{\mathbb{R}}^H$ be a pointwise convex map. For any $\lambda\in(0,1)$ and $f, g\in{\mathcal C}$, the following inequalities hold
\begin{equation}\label{2f1}
\Phi\big(f\nabla g\big)\leq \int_0^1\Phi\big(f\nabla_tg\big)dt\leq \mathbf{J}_\Phi(f,g)\leq \Phi(f)\nabla \Phi(g).
\end{equation}
If $\Phi$ is pointwise concave then \eqref{2f1} are reversed.
\end{corollary}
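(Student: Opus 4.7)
My plan is to obtain all three inequalities in \eqref{2f1} as immediate consequences of the weighted Hermite-Hadamard functional inequalities \eqref{21}, without any additional analytic input. The proof reduces to one specialization of the weight $\lambda$ for the leftmost estimate, and to integrating \eqref{21} with respect to $\lambda$ for the middle and rightmost estimates.

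For the leftmost inequality $\Phi(f\nabla g)\le\int_0^1\Phi(f\nabla_t g)\,dt$, I would specialize \eqref{21} to $\lambda=1/2$ and invoke Remark (i), which asserts $d\nu_{1/2}(t)=dt$; the left half of \eqref{21} then reads exactly $\Phi(f\nabla g)\le\int_0^1\Phi(f\nabla_t g)\,dt$. For the middle inequality $\int_0^1\Phi(f\nabla_t g)\,dt\le \mathbf{J}_\Phi(f,g)$, I would apply the left half of \eqref{21} pointwise in $\lambda$, giving $\Phi(f\nabla_\lambda g)\le {\mathcal M}_\lambda(\Phi;f,g)$, and then integrate both sides with respect to $\lambda$ over $[0,1]$; invoking the definition of $\mathbf{J}_\Phi$ and renaming the dummy variable $\lambda\mapsto t$ on the left yields the claim. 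For the rightmost inequality $\mathbf{J}_\Phi(f,g)\le\Phi(f)\nabla\Phi(g)$, I would similarly apply the right half of \eqref{21}, namely ${\mathcal M}_\lambda(\Phi;f,g)\le(1-\lambda)\Phi(f)+\lambda\Phi(g)$, integrate in $\lambda$ over $[0,1]$, and compute
\[
\int_0^1\bigl((1-\lambda)\Phi(f)+\lambda\Phi(g)\bigr)d\lambda=\tfrac{1}{2}\bigl(\Phi(f)+\Phi(g)\bigr)=\Phi(f)\nabla\Phi(g).
\]

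The pointwise-concave case is handled by the built-in symmetry already observed in \eqref{21}: each step above uses \eqref{21} monotonically in the stated direction, so reversing the inequality in \eqref{21} automatically reverses all three inequalities in \eqref{2f1}. Honestly, there is no genuine obstacle here; the only mildly delicate point is the implicit use of Fubini/Tonelli when regarding $\mathbf{J}_\Phi(f,g)$ as a double integral of $\Phi(f\nabla_t g)$ against $d\nu_\lambda(t)\otimes d\lambda$ on $[0,1]^2$, which is legitimate in the framework of the preceding theorem since everything takes place in the pointwise (extended) order on $\widetilde{\mathbb{R}}^H$ and the relevant integrals already appear in \eqref{24}.
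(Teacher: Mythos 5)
Your proposal is correct and follows essentially the same route as the paper: the authors also integrate \eqref{21} with respect to $\lambda$ over $(0,1)$ to obtain the middle and rightmost inequalities, and obtain the leftmost one from the left inequality in \eqref{2}, which is exactly your $\lambda=1/2$ specialization of \eqref{21} in view of Remark (i).
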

\begin{proof}
Integrating \eqref{21} with respect to $\lambda\in(0,1)$ and using the left inequality in \eqref{2}, we immediately get \eqref{2f1}.
\end{proof}

An extension of the map $\lambda\mapsto{\mathcal M}_\lambda(\Phi;f,g)$ to the whole interval $[0,1]$ is possible, as emphasized in the following result.

\begin{corollary}
Let $\Phi:\mathcal{C}\rightarrow \widetilde{\mathbb{R}}^H$ be a pointwise convex (resp. concave) map. For any $f, g\in{\mathcal C}$, we have
$$\lim_{\lambda\downarrow0}{\mathcal M}_\lambda(\Phi;f,g))=\Phi(f)\;\; \mbox{and}\;\; \lim_{\lambda\uparrow1}{\mathcal M}_\lambda(\Phi;f,g)=\Phi(g).$$
\end{corollary}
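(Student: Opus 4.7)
The natural route is to wedge $\mathcal{M}_\lambda(\Phi;f,g)$ between its two Hermite-Hadamard bounds from \eqref{21}, namely
\[
\Phi\big(f\nabla_\lambda g\big)\;\leq\;\mathcal{M}_\lambda(\Phi;f,g)\;\leq\;(1-\lambda)\Phi(f)+\lambda\Phi(g),
\]
(with the chain reversed in the concave case), and then let $\lambda\downarrow 0$ and $\lambda\uparrow 1$ via a squeeze. The upper bound is immediate: as $\lambda\downarrow 0$ the pointwise-affine expression $(1-\lambda)\Phi(f)+\lambda\Phi(g)$ tends to $\Phi(f)$ on all of $H$, and symmetrically it tends to $\Phi(g)$ as $\lambda\uparrow 1$. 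This already forces $\limsup_{\lambda\downarrow 0}\mathcal{M}_\lambda(\Phi;f,g)\leq\Phi(f)$ pointwise (with the obvious reversal in the concave case, where the roles of $\limsup$ and $\liminf$ are swapped).

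The work is in the lower bound. At each fixed $x\in H$, I would look at the scalar function $\psi_x(\lambda):=\Phi(f\nabla_\lambda g)(x)$. Pointwise convexity of $\Phi$ together with the affine parametrization $\nabla_\lambda$ shows that $\psi_x$ is convex on $[0,1]$, and in particular continuous on $(0,1)$; combined with the convention $f\nabla_0 g=f$, $f\nabla_1 g=g$ recalled after \eqref{ArHaGe}, this provides the limits $\psi_x(0^+)\leq\Phi(f)(x)$ and $\psi_x(1^-)\leq\Phi(g)(x)$. Inserting this into the sandwich yields $\mathcal{M}_\lambda(\Phi;f,g)(x)\to\Phi(f)(x)$ as $\lambda\downarrow 0$ (and the analogous statement at $\lambda\uparrow 1$), once one knows $\psi_x$ is in fact continuous at the relevant endpoint.

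For the second limit I would avoid repeating the whole argument and exploit the symmetry \eqref{measurerelation}: substituting $s=1-t$ in the integral defining $\mathcal{M}_\lambda$ yields $\mathcal{M}_{1-\lambda}(\Phi;f,g)=\mathcal{M}_\lambda(\Phi;g,f)$, reducing the limit $\lambda\uparrow 1$ to the limit $\lambda\downarrow 0$ with $f$ and $g$ interchanged.

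The main obstacle is precisely the endpoint continuity of $\psi_x$ at $\lambda=0$ (respectively $\lambda=1$). Convexity on $[0,1]$ only guarantees upper semicontinuity at an endpoint from the interior, i.e.\ $\limsup_{\lambda\to 0^+}\psi_x(\lambda)\leq\psi_x(0)=\Phi(f)(x)$; the matching $\liminf$ inequality is not a formal consequence of pointwise convexity alone and must really be extracted from the upper Hermite-Hadamard bound, since that upper bound also tends to $\Phi(f)(x)$ and therefore pins the lower bound from above. In other words, the two halves of the squeeze cannot be handled independently: it is the pairing of the lower bound $\Phi(f\nabla_\lambda g)$ with the upper bound $(1-\lambda)\Phi(f)+\lambda\Phi(g)$ (both converging to the same value) that produces the stated limit.
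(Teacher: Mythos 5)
Your overall strategy (sandwich $\mathcal{M}_\lambda$ between the two sides of \eqref{21}, then use the symmetry $\mathcal{M}_{1-\lambda}(\Phi;f,g)=\mathcal{M}_\lambda(\Phi;g,f)$ for the second limit) is the same as the paper's, and you correctly isolate the crux: the behaviour of $\psi_x(\lambda):=\Phi(f\nabla_\lambda g)(x)$ as $\lambda\downarrow 0$. But your resolution of that crux does not work, and this is a genuine gap. To close the squeeze you need $\liminf_{\lambda\downarrow 0}\psi_x(\lambda)\geq \psi_x(0)=\Phi(f)(x)$, i.e.\ lower semicontinuity of the convex function $\psi_x$ at the endpoint $0$. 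The upper Hermite--Hadamard bound cannot supply this: from $\psi_x(\lambda)\leq \mathcal{M}_\lambda(x)\leq (1-\lambda)\Phi(f)(x)+\lambda\Phi(g)(x)$ you only recover $\limsup_{\lambda\downarrow 0}\psi_x(\lambda)\leq \Phi(f)(x)$, which is the inequality convexity already gives for free and is in the wrong direction. Your final sentence asserts that the two bounds "both converge to the same value", but that is exactly the point at issue: a convex function on $[0,1]$ may satisfy $\lim_{\lambda\to 0^+}\psi(\lambda)<\psi(0)$ (take $\psi(0)=1$ and $\psi\equiv 0$ on $(0,1]$, which is convex), in which case the sandwich leaves $\lim\mathcal{M}_\lambda(x)$ trapped in a nondegenerate interval and undetermined. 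So the argument as written is circular at its key step.

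The paper closes this gap by a different (if terse) means: it invokes the claim that a pointwise convex map $\Phi$ is pointwise continuous on $\mathcal{C}$, and applies this along the segment $\lambda\mapsto f\nabla_\lambda g$ to conclude $\Phi(f\nabla_\lambda g)\to\Phi(f)$ directly, after which the squeeze is immediate. Your instinct that such endpoint continuity is "not a formal consequence of pointwise convexity alone" is sound as a criticism of the hypothesis, but it cannot be bypassed by reusing the upper bound; to repair your proof you must either prove (or assume, as the paper does) the continuity of $\lambda\mapsto\Phi(f\nabla_\lambda g)$ at $\lambda=0$ and $\lambda=1$, or find a genuinely different lower bound for $\mathcal{M}_\lambda$ that converges to $\Phi(f)$.
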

\begin{proof}
The desired results follow from \eqref{21} by using the fact that if $\Phi:\mathcal{C}\rightarrow \widetilde{\mathbb{R}}^H$ is pointwise convex then it is pointwise continuous on $\mathcal{C}$.
\end{proof}

\begin{proposition}
For any $\Phi:\mathcal{C}\rightarrow \widetilde{\mathbb{R}}^H$, $f, g\in{\mathcal C}$ and $\lambda\in[0,1]$, we have
\begin{equation}\label{35}
{\mathcal M}_{1-\lambda}(\Phi;f,g)={\mathcal M}_{\lambda}(\Phi;g,f).
\end{equation}
\end{proposition}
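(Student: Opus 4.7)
The plan is to unfold both sides using the definition in \eqref{24} and then connect them by the change of variables $s=1-t$, combined with the measure symmetry noted in \eqref{measurerelation}. The two ingredients are both transparent: the integrand $\Phi\big((1-t)f+tg\big)$ is symmetric under simultaneously swapping $(f,g)$ and replacing $t$ by $1-t$, while the measure $d\nu_\lambda$ has the conjugate symmetry $d\nu_{1-\lambda}(1-t)=d\nu_\lambda(t)$.

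Concretely, I would start from the right-hand side and write
\[
{\mathcal M}_{\lambda}(\Phi;g,f)=\int_0^1 \Phi\bigl((1-s)g+sf\bigr)\,d\nu_\lambda(s),
\]
then apply the substitution $s=1-t$ (so that $s=0\leftrightarrow t=1$), which yields
\[
{\mathcal M}_{\lambda}(\Phi;g,f)=\int_0^1 \Phi\bigl(t g+(1-t)f\bigr)\,d\nu_\lambda(1-t)=\int_0^1 \Phi\bigl((1-t)f+tg\bigr)\,d\nu_\lambda(1-t).
\]
From the identity \eqref{measurerelation}, namely $d\nu_{1-\lambda}(1-t)=d\nu_\lambda(t)$ for every $\lambda\in(0,1)$ and $t\in[0,1]$, I deduce (by replacing $\lambda$ with $1-\lambda$) that $d\nu_\lambda(1-t)=d\nu_{1-\lambda}(t)$. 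Substituting this gives exactly $\int_0^1\Phi(f\nabla_t g)\,d\nu_{1-\lambda}(t)={\mathcal M}_{1-\lambda}(\Phi;f,g)$, which is the claimed identity.

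The boundary cases $\lambda\in\{0,1\}$ require a brief separate mention, since \eqref{24} is stated for $\lambda\in(0,1)$; I would treat them by invoking the limits computed in the preceding corollary, so that ${\mathcal M}_0(\Phi;f,g):=\Phi(f)$ and ${\mathcal M}_1(\Phi;f,g):=\Phi(g)$, making the identity trivially true at the endpoints. There is no genuine obstacle here; the only point that requires some care is making sure the direction of the substitution is handled correctly so that the sign from reversing the limits of integration cancels the sign introduced by $ds=-dt$, and then applying \eqref{measurerelation} in the correct form.
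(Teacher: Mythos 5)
Your proof is correct and follows essentially the same route as the paper: unfold the definition \eqref{24}, substitute $t\mapsto 1-t$, and invoke the measure symmetry \eqref{measurerelation}; the only cosmetic difference is that you start from ${\mathcal M}_\lambda(\Phi;g,f)$ while the paper starts from ${\mathcal M}_{1-\lambda}(\Phi;f,g)$. Your explicit treatment of the endpoints $\lambda\in\{0,1\}$ is a small point of extra care that the paper leaves implicit.
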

\begin{proof}
By \eqref{22} and \eqref{24}, we have
$${\mathcal M}_{1-\lambda}(\Phi;f,g)=\int_0^1\Phi\big(f\nabla_t\,g\big)d\nu_{1-\lambda}(t).$$
Performing the change of variable with $t=1-u$, employing \eqref{measurerelation} and \eqref{24}, the proof of the desired relation is achieved.
\end{proof}

To establish further results, the following lemma \cite{RAI0} will be needed. It provides a refinement of the classical Jensen inequality.

\begin{lemma}
Let $\Phi:\mathcal{C}\rightarrow \widetilde{\mathbb{R}}^H$ be a pointwise convex map. The following inequalities
\begin{multline}\label{26}
r(a,b)\Big(\Phi(f)\nabla_a\Phi(g)-\Phi\big(f\nabla_ag\big)\Big)
\leq \Phi(f)\nabla_b\Phi(g)-\Phi\big(f\nabla_bg\big)\\
\leq R(a,b)\Big(\Phi(f)\nabla_a\Phi(g)-\Phi\big(f\nabla_ag\big)\Big),
\end{multline}
hold for any $f, g\in \mathcal{C}$ and $a, b\in(0,1)$ where we set
\begin{equation}\label{27}
r(a,b):=\min\left(\frac{b}{a},\frac{1-b}{1-a}\right),\;\; R(a,b):=\max\left(\frac{b}{a},\frac{1-b}{1-a}\right).
\end{equation}
If $\Phi$ is pointwise concave then \eqref{26} are reversed.
\end{lemma}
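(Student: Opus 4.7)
The plan is to introduce the \emph{Jensen gap} $J_\Phi(\lambda):=\Phi(f)\nabla_\lambda\Phi(g)-\Phi(f\nabla_\lambda g)$, which is pointwise nonnegative by the pointwise convexity of $\Phi$ (i.e., by \eqref{convpointwise}), and to show that $r(a,b)J_\Phi(a)\le J_\Phi(b)\le R(a,b)J_\Phi(a)$. The lower bound will be obtained by two separate cases according to whether $b\le a$ or $b\ge a$; the upper bound will then follow for free by swapping the roles of $a$ and $b$ and using the identity $R(a,b)=1/r(b,a)$.

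For the first case, assume $0<b\le a<1$. I would write $b$ as a convex combination of $a$ and $0$, namely $b=\tfrac{b}{a}\cdot a+\bigl(1-\tfrac{b}{a}\bigr)\cdot 0$. Since both the map $\lambda\mapsto\Phi(f)\nabla_\lambda\Phi(g)$ and the map $\lambda\mapsto f\nabla_\lambda g$ are affine in $\lambda$, this decomposition yields
\begin{equation*}
\Phi(f)\nabla_b\Phi(g)=\tfrac{b}{a}\,\Phi(f)\nabla_a\Phi(g)+\bigl(1-\tfrac{b}{a}\bigr)\Phi(f),\qquad
f\nabla_b g=\tfrac{b}{a}(f\nabla_a g)+\bigl(1-\tfrac{b}{a}\bigr)f.
\end{equation*}
Applying the pointwise convexity of $\Phi$ to the second identity gives $\Phi(f\nabla_b g)\le \tfrac{b}{a}\Phi(f\nabla_a g)+(1-\tfrac{b}{a})\Phi(f)$, and subtracting from the first identity produces $J_\Phi(b)\ge \tfrac{b}{a}\,J_\Phi(a)$. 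Since $b\le a$ forces $b/a\le 1\le (1-b)/(1-a)$, we have $r(a,b)=b/a$ here.

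The case $a\le b<1$ is symmetric: I would write $b=\tfrac{1-b}{1-a}\cdot a+\tfrac{b-a}{1-a}\cdot 1$ and argue exactly as above (now mixing $a$ with $1$ in place of $0$) to obtain $J_\Phi(b)\ge \tfrac{1-b}{1-a}J_\Phi(a)$, which again matches $r(a,b)$ because $b\ge a$ implies $(1-b)/(1-a)\le 1\le b/a$. Combining the two cases yields the left inequality in \eqref{26}. For the right inequality, I would apply the already-established lower bound with $a$ and $b$ interchanged, giving $J_\Phi(a)\ge r(b,a)J_\Phi(b)$; since $J_\Phi(a)\ge 0$ and $r(b,a)>0$ (as $a,b\in(0,1)$), division yields $J_\Phi(b)\le J_\Phi(a)/r(b,a)=R(a,b)J_\Phi(a)$, as desired.

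The concave case follows by replacing $\Phi$ by $-\Phi$, which flips every inequality while leaving $r(a,b)$ and $R(a,b)$ unchanged. The only mildly delicate point—though not a real obstacle—is that one is manipulating pointwise inequalities between extended-real-valued functionals; the argument nevertheless goes through pointwise, exactly as in the scalar Jensen-gap refinement, because the linear decompositions of $\Phi(f)\nabla_b\Phi(g)$ and $f\nabla_b g$ are identities (not inequalities) and the pointwise convexity \eqref{convpointwise} is applied only once in each case.
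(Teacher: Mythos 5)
Your argument is correct, but note that the paper itself offers no proof to compare against: the lemma is imported verbatim from \cite{RAI0} and stated without proof. Your write-up therefore supplies a self-contained derivation, and it is the standard one for ratio-type refinements of the Jensen gap: the algebraic identities $f\nabla_b g=\tfrac{b}{a}(f\nabla_a g)+\bigl(1-\tfrac{b}{a}\bigr)f$ (for $b\le a$) and $f\nabla_b g=\tfrac{1-b}{1-a}(f\nabla_a g)+\tfrac{b-a}{1-a}g$ (for $b\ge a$) check out, the mixing coefficients lie in $[0,1]$ so pointwise convexity of $\Phi$ applies (and $f\nabla_a g\in\mathcal{C}$ by convexity of $\mathcal{C}$), the case split matches $r(a,b)$ correctly, and the identity $R(a,b)=1/r(b,a)$ legitimately converts the lower bound into the upper bound since $r(b,a)>0$ for $a,b\in(0,1)$ (the nonnegativity of $J_\Phi(a)$ is not actually needed for that division). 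Two cosmetic caveats, neither of which is a gap of yours since they are inherent to the statement as the paper gives it: subtractions such as $\Phi(f)\nabla_b\Phi(g)-\Phi(f\nabla_b g)$ require the usual conventions when values $+\infty$ occur, and in the concave case $-\Phi$ formally leaves the codomain $\widetilde{\mathbb{R}}^H$, so it is cleaner to say that every application of \eqref{convpointwise} is simply reversed rather than to literally substitute $-\Phi$.
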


We also need the following result.

\begin{lemma}
For any $a,b,\lambda\in(0,1)$, the following relations hold
\begin{equation}\label{28}
\int_0^1r(a,b)\;d\nu_\lambda(b)=\alpha_{a,1-\lambda}+\alpha_{1-a,\lambda}
\end{equation}
\begin{equation}\label{29}
\int_0^1R(a,b)\;d\nu_\lambda(b)=\dfrac{\lambda}{a}+\dfrac{1-\lambda}{1-a}-\alpha_{a,1-\lambda}-\alpha_{1-a,\lambda},
\end{equation}
where, for $x,y>0$, we set
$$\alpha_{x,y}:=y^2\dfrac{1-x^{\frac{1-y}{y}}}{1-x}.$$
\end{lemma}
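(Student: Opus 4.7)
The plan is to observe first that $r(a,b) + R(a,b) = b/a + (1-b)/(1-a)$, since the minimum and maximum of two numbers always sum to those numbers. From the proof of the previous corollary, $\int_0^1 b\,d\nu_\lambda(b) = \xi_{0,1}^\lambda = \lambda$ and $\nu_\lambda([0,1]) = \chi_{0,1}^\lambda = 1$, so integrating this pointwise identity against $d\nu_\lambda(b)$ gives
$$\int_0^1 r(a,b)\,d\nu_\lambda(b) + \int_0^1 R(a,b)\,d\nu_\lambda(b) = \frac{\lambda}{a} + \frac{1-\lambda}{1-a}.$$
Therefore \eqref{29} is an immediate consequence of \eqref{28}, and the whole task reduces to establishing \eqref{28}.

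For \eqref{28} I would split the integral at the crossover $b = a$ (noting $b/a \le (1-b)/(1-a) \iff b \le a$):
$$\int_0^1 r(a,b)\,d\nu_\lambda(b) = \frac{1}{a}\int_0^a b\,d\nu_\lambda(b) + \frac{1}{1-a}\int_a^1 (1-b)\,d\nu_\lambda(b).$$
The first piece equals $\xi_{0,a}^\lambda/a$ directly by \eqref{integ}. For the second piece, the substitution $u = 1-b$ together with the symmetry $d\nu_\lambda(1-u) = d\nu_{1-\lambda}(u)$ derived from \eqref{measurerelation} rewrites it as $\int_0^{1-a} u\,d\nu_{1-\lambda}(u) = \xi_{0,1-a}^{1-\lambda}$, divided by $1-a$. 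In other words, the second piece is obtained from the first by the swap $(a,\lambda) \leftrightarrow (1-a,1-\lambda)$.

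The real core of the proof is then the algebraic identity
$$\frac{\xi_{0,a}^\lambda}{a} = \alpha_{1-a,\lambda} + \lambda(1-\lambda)\Big(a^{\lambda/(1-\lambda)} - (1-a)^{(1-\lambda)/\lambda}\Big).$$
I would obtain it by expanding \eqref{integ} at $(0,a)$, isolating the constant part as $\lambda^2\bigl(1 - (1-a)^{(1-\lambda)/\lambda}\bigr)$ (so that, after dividing by $a$, it becomes exactly $\alpha_{1-a,\lambda}$), and then using the factorizations $(1-a)^{1/\lambda} = (1-a)(1-a)^{(1-\lambda)/\lambda}$ and $a^{1/(1-\lambda)} = a\cdot a^{\lambda/(1-\lambda)}$ to pull a common factor $a$ out of the remaining terms. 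The main obstacle is simply the bookkeeping among the four exponents $1/\lambda$, $(1-\lambda)/\lambda$, $1/(1-\lambda)$ and $\lambda/(1-\lambda)$; no genuine idea is required, only care.

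Applying the $(a,\lambda)\leftrightarrow(1-a,1-\lambda)$ swap to that identity yields
$$\frac{\xi_{0,1-a}^{1-\lambda}}{1-a} = \alpha_{a,1-\lambda} + \lambda(1-\lambda)\Big((1-a)^{(1-\lambda)/\lambda} - a^{\lambda/(1-\lambda)}\Big),$$
whose cross term is the exact negative of the first one. Adding the two pieces makes the cross terms cancel and leaves $\alpha_{a,1-\lambda} + \alpha_{1-a,\lambda}$, which is \eqref{28}; then \eqref{29} follows from the reduction in the first paragraph.
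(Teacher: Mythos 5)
Your proposal is correct and follows exactly the paper's route: the same split of $\int_0^1 r(a,b)\,d\nu_\lambda(b)$ at the crossover $b=a$ into $\frac{1}{a}\int_0^a b\,d\nu_\lambda(b)+\frac{1}{1-a}\int_a^1(1-b)\,d\nu_\lambda(b)$, and the same derivation of \eqref{29} from \eqref{28} via the pointwise identity $r(a,b)+R(a,b)=\frac{b}{a}+\frac{1-b}{1-a}$ together with $\int_0^1 b\,d\nu_\lambda(b)=\lambda$ and $\nu_\lambda([0,1])=1$. The only difference is that you carry out the ``simple integral techniques'' the paper omits, and your algebra (including the cancellation of the cross terms $\pm\lambda(1-\lambda)\big(a^{\lambda/(1-\lambda)}-(1-a)^{(1-\lambda)/\lambda}\big)$ under the swap $(a,\lambda)\leftrightarrow(1-a,1-\lambda)$) checks out.
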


\begin{proof}
Let us notice that $0\leq b\leq a$ if and only if $\frac{b}{a}\leq\frac{1-b}{1-a}$. Thus we can write
$$\int_0^1r(a,b)\;d\nu_\lambda(b)=\frac{1}{a}\int_0^ab.d\nu_\lambda(b)+\frac{1}{1-a}\int_a^1(1-b).d\nu_\lambda(b).$$
Using \eqref{22} and some simple integral techniques, we obtain \eqref{28}. The relation \eqref{29} can be deduced from \eqref{28} by employing the following relation
$$R(a,b)+r(a,b)=\dfrac{1}{1-a}+\dfrac{1-2a}{a(1-a)}b.$$
The details are straightforward and therefore omitted here.
\end{proof}

We are now in the position to state the following result which provides a refinement and a reverse of \eqref{21}.

\begin{theorem}\label{th2}
Let $\Phi:\mathcal{C}\rightarrow \widetilde{\mathbb{R}}^H$ be a pointwise convex map. For any $a,\lambda\in[0,1]$ and $f, g\in \mathcal{C}$, the following inequalities hold
\begin{multline}\label{2.10}
m(a,\lambda)\Big(\Phi(f)\nabla_a\Phi(g)-\Phi\big(f\nabla_ag\big)\Big)
\leq \Phi(f)\nabla_\lambda \Phi(g)-\int_0^1\Phi\big(f\nabla_tg\big)d\nu_\lambda(t)\\
\leq M(a,\lambda)\Big(\Phi(f)\nabla_a\Phi(g)-\Phi\big(f\nabla_ag\big)\Big),
\end{multline}
where we set
$$m(a,\lambda):=(1-\lambda)^2\dfrac{1-a^{\frac{\lambda}{1-\lambda}}}{1-a}+\lambda^2\dfrac{1-(1-a)^{\frac{1-\lambda}{\lambda}}}{a}
\text{ and }
 M(a,\lambda):=\dfrac{1-\lambda}{1-a}+\dfrac{\lambda}{a}-m(a,\lambda).$$
If $\Phi$ is pointwise concave then \eqref{2.10} are reversed.
\end{theorem}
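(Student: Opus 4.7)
The plan is to apply the refined Jensen inequality \eqref{26} pointwise in the parameter $t$, integrate against the probability measure $d\nu_\lambda$ on $[0,1]$, and then identify the resulting scalar coefficients via \eqref{28}--\eqref{29}.

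First I would fix $a\in(0,1)$ and abbreviate $D_a(f,g):=\Phi(f)\nabla_a\Phi(g)-\Phi(f\nabla_ag)$, which is nonnegative in the pointwise order by the pointwise convexity of $\Phi$ (this is the left-hand half of \eqref{26} taken at $b=a$, or equivalently the classical Jensen inequality). Rewriting \eqref{26} with $b$ replaced by a running parameter $t\in(0,1)$ yields
$$r(a,t)\,D_a(f,g)\le \Phi(f)\nabla_t\Phi(g)-\Phi(f\nabla_tg)\le R(a,t)\,D_a(f,g),$$
and since $D_a(f,g)$ does not depend on $t$ and is pointwise $\ge 0$, integrating against $d\nu_\lambda(t)$ factors it out to give
$$\left(\int_0^1 r(a,t)\,d\nu_\lambda(t)\right)D_a(f,g)\le \int_0^1\!\big[\Phi(f)\nabla_t\Phi(g)-\Phi(f\nabla_tg)\big]\,d\nu_\lambda(t)\le \left(\int_0^1 R(a,t)\,d\nu_\lambda(t)\right)D_a(f,g).$$

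Next I would simplify the middle term. Taking $a=0$, $b=1$ in \eqref{integr} and \eqref{integ} gives $\nu_\lambda([0,1])=1$ and $\int_0^1 t\,d\nu_\lambda(t)=\lambda$, hence $\int_0^1 \Phi(f)\nabla_t\Phi(g)\,d\nu_\lambda(t)=\Phi(f)\nabla_\lambda\Phi(g)$, so the middle quantity is precisely $\Phi(f)\nabla_\lambda\Phi(g)-\int_0^1\Phi(f\nabla_tg)\,d\nu_\lambda(t)$, as required in \eqref{2.10}. A direct substitution in the definition of $\alpha_{x,y}$ shows $\alpha_{a,1-\lambda}+\alpha_{1-a,\lambda}=m(a,\lambda)$; combining this with \eqref{28}--\eqref{29} identifies the bounding coefficients as exactly $m(a,\lambda)$ and $M(a,\lambda)$, completing the proof for $a,\lambda\in(0,1)$.

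The main obstacle is not the computation, which is essentially bookkeeping, but the endpoint cases $a\in\{0,1\}$ and $\lambda\in\{0,1\}$, where $r(a,\cdot)$, $R(a,\cdot)$ or $d\nu_\lambda$ are not given by the interior formulas. I would treat these by continuity: both sides of \eqref{2.10} extend continuously, and each side degenerates to $0$ at the boundary (for $a\in\{0,1\}$ because $D_a=0$; for $\lambda\in\{0,1\}$ because, by the limit corollary stated just before, $\int_0^1\Phi(f\nabla_tg)\,d\nu_\lambda(t)$ and $\Phi(f)\nabla_\lambda\Phi(g)$ share the same limit $\Phi(f)$ or $\Phi(g)$). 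The concave case is immediate: \eqref{26} reverses under concavity and $D_a$ becomes pointwise $\le 0$, but since $D_a$ is still $t$-independent it factors out of the integral as before, reversing the whole chain uniformly.
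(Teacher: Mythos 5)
Your proposal is correct and follows essentially the same route as the paper: multiply the refined Jensen inequality \eqref{26} by $d\nu_\lambda$, integrate over the running parameter, and identify the coefficients via \eqref{28}--\eqref{29}. In fact you supply details the paper omits, namely the verification that $\alpha_{a,1-\lambda}+\alpha_{1-a,\lambda}=m(a,\lambda)$, the normalization $\int_0^1 d\nu_\lambda=1$ and $\int_0^1 t\,d\nu_\lambda(t)=\lambda$ for the middle term, and the limiting treatment of the endpoint cases $a,\lambda\in\{0,1\}$.
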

\begin{proof}
Multiplying all sides of \eqref{26} by $d\nu_\lambda(b)$ and then integrating with respect to $b\in[0,1]$, we obtain the desired inequalities by taking into account the relations \eqref{28} and \eqref{29}. The details are simple and therefore omitted here for the reader.
\end{proof}

Choosing $a=1/2$ in Theorem \ref{th2} and making some algebraic computations, we obtain the following result.

\begin{corollary}\label{cor1f}
Let $\Phi:\mathcal{C}\rightarrow \widetilde{\mathbb{R}}^H$ be pointwise convex, $\lambda\in[0,1]$ and $f, g\in \mathcal{C}$. Then we have
\begin{multline}\label{3.11}
\alpha(\lambda)\Big(\Phi(f)\nabla \Phi(g)-\Phi\big(f\nabla g\big)\Big)
\leq \Phi(f)\nabla_\lambda \Phi(g)-\int_0^1\Phi\big(f\nabla_tg\big)d\nu_\lambda(t)\\
\leq \beta(\lambda)\Big(\Phi(f)\nabla \Phi(g)-\Phi\big(f\nabla g\big)\Big),
\end{multline}
where, $\alpha(\lambda):=m(1/2, \lambda) \text{ and } \beta(\lambda):=M(1/2, \lambda)$.

If $\Phi$ is pointwise concave then \eqref{2.10} are reversed.
\end{corollary}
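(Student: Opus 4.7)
The plan is to derive this corollary as a direct specialization of Theorem \ref{th2} at the value $a=1/2$. Concretely, I would start from the inequalities in Theorem \ref{th2}, namely
$$m(a,\lambda)\Big(\Phi(f)\nabla_a\Phi(g)-\Phi(f\nabla_a g)\Big)\leq \Phi(f)\nabla_\lambda \Phi(g)-\int_0^1\Phi(f\nabla_t g)\,d\nu_\lambda(t)\leq M(a,\lambda)\Big(\Phi(f)\nabla_a\Phi(g)-\Phi(f\nabla_a g)\Big),$$
valid for every $a,\lambda\in[0,1]$ under the pointwise convexity of $\Phi$, and then set $a=1/2$.

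Next I would carry out the two routine simplifications produced by the choice $a=1/2$. First, by the midpoint convention stated just after \eqref{ArHaGe}, the weighted arithmetic means collapse to the symmetric ones: $f\nabla_{1/2} g=f\nabla g$ and $\Phi(f)\nabla_{1/2}\Phi(g)=\Phi(f)\nabla \Phi(g)$, so the common factor on both sides of the inequality becomes exactly $\Phi(f)\nabla\Phi(g)-\Phi(f\nabla g)$, as required in \eqref{3.11}. Second, by the very definition $\alpha(\lambda):=m(1/2,\lambda)$ and $\beta(\lambda):=M(1/2,\lambda)$, the two coefficients on the outside are precisely the ones appearing in the statement, and the inequalities \eqref{3.11} follow immediately.

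If an explicit form of the coefficients is desired, I would substitute $a=1/2$ in the formulas defining $m$ and $M$, obtaining
$$\alpha(\lambda)=2(1-\lambda)^2\bigl(1-2^{-\lambda/(1-\lambda)}\bigr)+2\lambda^2\bigl(1-2^{-(1-\lambda)/\lambda}\bigr),\qquad \beta(\lambda)=2-\alpha(\lambda),$$
where the identity $\beta(\lambda)=2-\alpha(\lambda)$ comes from $\frac{1-\lambda}{1-a}+\frac{\lambda}{a}\big|_{a=1/2}=2$. The pointwise concave case requires no extra work: reversing the inequalities in Theorem \ref{th2} reverses those in \eqref{3.11} with the same coefficients. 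There is no genuine obstacle here; the only thing to be careful about is bookkeeping, in particular making sure that the midpoint identifications $f\nabla_{1/2}g=f\nabla g$ and $\Phi(f)\nabla_{1/2}\Phi(g)=\Phi(f)\nabla\Phi(g)$ are used consistently on both the inner and outer terms so that the common factor $\Phi(f)\nabla\Phi(g)-\Phi(f\nabla g)$ is correctly isolated on each side.
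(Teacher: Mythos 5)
Your proposal is correct and is exactly the paper's argument: the corollary is obtained by setting $a=1/2$ in Theorem \ref{th2}, using $f\nabla_{1/2}g=f\nabla g$ and the definitions $\alpha(\lambda)=m(1/2,\lambda)$, $\beta(\lambda)=M(1/2,\lambda)$. Your explicit evaluation $\beta(\lambda)=2-\alpha(\lambda)$ is a correct bonus beyond what the paper records.
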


To point out results involving harmonic means, we recall the following result, \cite{RAI0}.

\begin{lemma}
For any $f,g\in \widetilde{\mathbb{R}}^H$ and $a, b\in(0,1)$, we have the following inequalities
\begin{gather}\label{26f}
r(a,b)\Big(f\nabla_ag-f!_ag\big) \leq f\nabla_bg-f!_bg \leq R(a,b)\Big(f\nabla_ag-f!_ag\Big),
\end{gather}
where $r(a,b)$ and $R(a,b)$ as defined in \eqref{27}.
\end{lemma}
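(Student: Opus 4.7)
The strategy is to recognize inequality \eqref{26f} as a direct corollary of the refined Jensen inequality \eqref{26}, applied to the Fenchel conjugation map acting on the pair $(f^*, g^*)$ in place of $(f, g)$.

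First, I would check that the map $\Phi : \Gamma_0(H) \to \Gamma_0(H)$ defined by $\Phi(h) := h^*$ is pointwise convex. For any $h_1, h_2 \in \Gamma_0(H)$ and $\lambda \in [0,1]$, the linearity of the Hilbert bracket combined with the subadditivity of the supremum gives, pointwise on $H$,
$$\bigl((1-\lambda)h_1 + \lambda h_2\bigr)^{*} \leq (1-\lambda)h_1^{*} + \lambda h_2^{*}.$$
This is the same observation that was used to pass from \eqref{2} to \eqref{3}, so $\Phi$ belongs to the class of maps to which \eqref{26} applies.

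Next, I would substitute $f \leftarrow f^*$ and $g \leftarrow g^*$ into \eqref{26}. Under the (implicit) hypothesis $f, g \in \Gamma_0(H)$, which is anyway required for $f !_\lambda g$ to be well defined, biconjugation yields $\Phi(f^*) = f^{**} = f$ and $\Phi(g^*) = g^{**} = g$, while the very definition of the weighted harmonic functional mean in \eqref{ArHaGe} gives
$$\Phi\bigl(f^* \nabla_\lambda g^*\bigr) = \bigl((1-\lambda)f^* + \lambda g^*\bigr)^{*} = f !_\lambda g.$$

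Plugging these four identities into \eqref{26} collapses each bracket $\Phi(f^*)\nabla_\lambda \Phi(g^*) - \Phi(f^* \nabla_\lambda g^*)$ into $f \nabla_\lambda g - f !_\lambda g$ for $\lambda = a$ and $\lambda = b$, which is exactly \eqref{26f}. There is essentially no calculation to carry out beyond the four identifications above; the only mildly delicate point, and what I expect to be the main (but still minor) obstacle, is the tacit restriction from the ambient $\widetilde{\mathbb{R}}^H$ of the statement to the cone $\Gamma_0(H)$ on which biconjugation holds, which is however precisely the natural setting in which the harmonic functional mean was introduced in Section \ref{sect2}.
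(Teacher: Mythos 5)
The paper does not actually prove this lemma: it is recalled verbatim from \cite{RAI0}, so there is no internal proof to match. Your derivation is a legitimate and rather elegant way to obtain it from material already on the page: you specialize the refined Jensen-type Lemma giving \eqref{26} (itself also recalled from \cite{RAI0}) to the pointwise convex map $\Phi(h)=h^*$ evaluated at the pair $(f^*,g^*)$, and the four identifications $\Phi(f^*)=f^{**}=f$, $\Phi(g^*)=g^{**}=g$, $\Phi(f^*\nabla_a g^*)=f!_ag$, $\Phi(f^*\nabla_b g^*)=f!_bg$ do collapse \eqref{26} into \eqref{26f}; the pointwise convexity of conjugation is exactly the observation the paper uses to pass from \eqref{2} to \eqref{3}. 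The one substantive caveat is the one you already flag: the lemma as stated claims validity for all $f,g\in\widetilde{\mathbb{R}}^H$, whereas your route only delivers it when $f^{**}=f$ and $g^{**}=g$, i.e.\ on $\Gamma_0(H)$. For general $f,g$ the substitution yields the inequality with $f^{**}\nabla_a g^{**}$ and $f^{**}\nabla_b g^{**}$ in place of $f\nabla_a g$ and $f\nabla_b g$ (the harmonic terms are unaffected, since $f!_ag$ depends on $f$ only through $f^*=f^{***}$), and since $f^{**}\le f$ these terms move in the wrong direction to recover the literal statement. Given that $f!_a g$ is only introduced in Section \ref{sect2} for $f,g\in\Gamma_0(H)$ anyway, this is best read as an imprecision in the lemma's hypotheses rather than a gap in your argument, but it is worth stating the restriction explicitly rather than leaving it tacit. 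With that proviso your proof is correct, and it has the advantage over a bare citation of exhibiting \eqref{26f} as a formal consequence of \eqref{26} via Legendre--Fenchel duality, in the same spirit as Proposition \ref{th2fenchel}.
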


\begin{theorem}\label{th2f}
Let $\Phi:\mathcal{C}\rightarrow \widetilde{\mathbb{R}}^H$ be pointwise convex. For any $a,\lambda\in[0,1]$ and $f, g\in \mathcal{C}$, the following inequalities hold
\begin{multline}\label{2.10ff}
m(a,\lambda)\Big(\Phi(f)\nabla_a\Phi(g)-\Phi(f)!_a\Phi(g)\Big)
\leq\Phi(f)\nabla_\lambda\Phi(g)-\int_0^1\Phi(f)!_t\Phi(g)\,d\nu_\lambda(t)\\
\leq M(a,\lambda)\Big(\Phi(f)\nabla_a\Phi(g)-\Phi(f)!_a\Phi(g)\Big),
\end{multline}
where, $m(a,\lambda)$ and $ M(a,\lambda)$ stand for the real numbers defined in Theorem \ref{th2}.
\end{theorem}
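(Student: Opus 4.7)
The plan is to mimic the argument for Theorem \ref{th2} almost verbatim, only swapping the role played by the refined Jensen inequality \eqref{26} for its arithmetic-vs-harmonic counterpart \eqref{26f}. The crucial observation is that the right-hand side of \eqref{26f} is stated for arbitrary $f,g\in\widetilde{\mathbb R}^H$, so we may freely substitute $\Phi(f)$ for $f$ and $\Phi(g)$ for $g$ with no convexity hypothesis on $\Phi$ being used at this stage (the pointwise convexity of $\Phi$ is not actually needed for the final inequality to make sense, but we keep it in the statement to match the framework).

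First I would start from \eqref{26f} applied to $\Phi(f)$ and $\Phi(g)$, which yields, for any $a,b\in(0,1)$,
\begin{multline*}
r(a,b)\Bigl(\Phi(f)\nabla_a\Phi(g)-\Phi(f)!_a\Phi(g)\Bigr)
\leq \Phi(f)\nabla_b\Phi(g)-\Phi(f)!_b\Phi(g)\\
\leq R(a,b)\Bigl(\Phi(f)\nabla_a\Phi(g)-\Phi(f)!_a\Phi(g)\Bigr).
\end{multline*}
Next I would multiply every side by $d\nu_\lambda(b)$ (a nonnegative measure by the lemma at the beginning of Section \ref{sect3}) and integrate over $b\in[0,1]$. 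Using \eqref{28} and \eqref{29}, the left- and right-hand coefficients become exactly $m(a,\lambda)$ and $M(a,\lambda)$, as in the proof of Theorem \ref{th2}.

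The middle term requires a brief computation. Since $\Phi(f)\nabla_b\Phi(g)=(1-b)\Phi(f)+b\Phi(g)$, \eqref{integ} and \eqref{integr} (specialised to $a=0,\, b=1$, where they give $\chi_{0,1}^\lambda=1$ and $\xi_{0,1}^\lambda=\lambda$) yield
\[
\int_0^1 \Phi(f)\nabla_b\Phi(g)\,d\nu_\lambda(b)=(1-\lambda)\Phi(f)+\lambda\Phi(g)=\Phi(f)\nabla_\lambda\Phi(g),
\]
while $\int_0^1\Phi(f)!_b\Phi(g)\,d\nu_\lambda(b)$ is left untouched. Subtracting these two integrals produces precisely the middle expression of \eqref{2.10ff}. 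This completes the argument; the endpoints $a,\lambda\in\{0,1\}$ are handled by continuous extension, or directly by the endpoint conventions introduced after \eqref{ArHaGe}.

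The main obstacle in this proof is not conceptual but notational: one must verify that the integrals $\int_0^1 r(a,b)\,d\nu_\lambda(b)$ and $\int_0^1 R(a,b)\,d\nu_\lambda(b)$ appearing here are identical to the ones already evaluated in \eqref{28}--\eqref{29}, and that the identification $m(a,\lambda)=\alpha_{a,1-\lambda}+\alpha_{1-a,\lambda}$, $M(a,\lambda)=\lambda/a+(1-\lambda)/(1-a)-m(a,\lambda)$ indeed matches the definitions given in Theorem \ref{th2}. Once this bookkeeping is noted, the proof reduces to a one-line remark that it is obtained from Theorem \ref{th2} by replacing Lemma \eqref{26} with Lemma \eqref{26f}, so the details can be safely omitted.
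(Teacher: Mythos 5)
Your proposal is correct and follows essentially the same route as the paper: apply \eqref{26f} to $\Phi(f)$ and $\Phi(g)$, integrate against $d\nu_\lambda$, and invoke \eqref{28}--\eqref{29} together with the computation $\int_0^1\Phi(f)\nabla_t\Phi(g)\,d\nu_\lambda(t)=\Phi(f)\nabla_\lambda\Phi(g)$. Your side remark that the pointwise convexity of $\Phi$ is never actually used is accurate, since \eqref{26f} holds for arbitrary elements of $\widetilde{\mathbb{R}}^H$.
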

\begin{proof}
Utilizing \eqref{26f} with $\Phi(f)\in\widetilde{\mathbb{R}}^H\in$ and $\Phi(g)\in\widetilde{\mathbb{R}}^H$ instead of $f$ and $g$, respectively, we get
\begin{multline}\label{27f}
r(a,t)\Big(\Phi(f)\nabla_a\Phi(g)-\Phi(f)!_a\Phi(g)\big)
\leq \Phi(f)\nabla_t\Phi(g)-\Phi(f)!_t\Phi(g)\\
\leq R(a,t)\Big(\Phi(f)\nabla_a\Phi(g)-\Phi(f)!_a\Phi(g)\Big),
\end{multline}
Multiplying all sides of \eqref{27f} by $d\nu_\lambda(t)$ and then integrating with respect to $t\in[0,1]$, we obtain the desired inequalities.
\end{proof}

Choosing $a=1/2$ in \eqref{2.10ff} and making some elementary calculations, we get the following corollary.

\begin{corollary}
Let $\Phi:\mathcal{C}\rightarrow \widetilde{\mathbb{R}}^H$ be pointwise convex, $\lambda\in[0,1]$ and $f, g\in \mathcal{C}$. Then
\begin{multline}\label{3.11}
\alpha(\lambda)\Big(\Phi(f)\nabla\Phi(g)-\Phi(f)!\Phi(g)\Big)
\leq\Phi(f)\nabla_\lambda\Phi(g)-\int_0^1\Phi(f)!_t\Phi(g)\,d\nu_\lambda(t)\\
\leq \beta(\lambda)\Big(\Phi(f)\nabla\Phi(g)-\Phi(f)!\Phi(g)\Big).
\end{multline}
where, $\alpha(\lambda)$ and $\beta(\lambda)$ stand for the real numbers defined in Corollary \ref{cor1f}.
\end{corollary}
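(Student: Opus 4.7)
The plan is to specialize Theorem \ref{th2f} at $a=1/2$ and read off the assertion directly from the definitions already in place. No new ingredient is needed; the entire content of the corollary is the algebraic simplification of \eqref{2.10ff} under this particular choice of $a$.

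First I would substitute $a=1/2$ throughout the chain \eqref{2.10ff}. The middle member $\Phi(f)\nabla_\lambda \Phi(g) - \int_0^1 \Phi(f)!_t \Phi(g)\, d\nu_\lambda(t)$ is independent of $a$, so it is unchanged. On both outer sides, the factor $\Phi(f)\nabla_a \Phi(g) - \Phi(f)!_a \Phi(g)$ becomes, by the notational convention introduced right after \eqref{ArHaGe} (namely that $\nabla_{1/2}$ and $!_{1/2}$ are written simply as $\nabla$ and $!$), the unweighted difference $\Phi(f)\nabla \Phi(g) - \Phi(f)!\Phi(g)$.

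Next I would identify the multiplicative constants. By definition the corollary sets $\alpha(\lambda) := m(1/2,\lambda)$ and $\beta(\lambda) := M(1/2,\lambda)$, where $m$ and $M$ are the quantities from Theorem \ref{th2}. Substituting $a=1/2$ in the formula for $m(a,\lambda)$ produces a fully explicit expression for $\alpha(\lambda)$, and the companion identity $M(a,\lambda) = \tfrac{1-\lambda}{1-a} + \tfrac{\lambda}{a} - m(a,\lambda)$ yields the corresponding expression for $\beta(\lambda)$; these reductions are the only ``elementary calculations'' alluded to in the statement. Combining this identification of constants with the simplification of the bracketed factor from the previous step turns \eqref{2.10ff} at $a=1/2$ into exactly \eqref{3.11}.

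Since each step is either a direct substitution or an invocation of the conventions already fixed in Section \ref{sect2}, I anticipate no genuine obstacle. The only point requiring mild attention is to check that both the upper and the lower factors collapse to the same quantity $\Phi(f)\nabla\Phi(g) - \Phi(f)!\Phi(g)$ after setting $a=1/2$, which is immediate, and to confirm that the formulas for $m(1/2,\lambda)$ and $M(1/2,\lambda)$ agree with the $\alpha(\lambda)$ and $\beta(\lambda)$ appearing in Corollary \ref{cor1f}, which is by definition.
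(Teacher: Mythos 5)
Your proposal is correct and follows exactly the paper's route: the corollary is obtained by setting $a=1/2$ in \eqref{2.10ff}, noting that the middle member is independent of $a$ and that the outer factors reduce to $\Phi(f)\nabla\Phi(g)-\Phi(f)!\Phi(g)$, with the constants $\alpha(\lambda)=m(1/2,\lambda)$ and $\beta(\lambda)=M(1/2,\lambda)$ identified by definition. Nothing is missing.
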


\section{New weighted functional means}\label{sect4}

In this section, we introduce two new weighted logarithmic means involving convex functionals. As pointed out in \cite{RaiBou}, for fixed $f,g\in \Gamma_0(H)$, the maps $t \longmapsto f\nabla_t g$ and $t \longmapsto f\sharp_t g$ are point-wisely continuous on $(0, 1)$. Thus, we can then state the following definition.

\begin{definition}\label{defwl}
Let $f,g\in \Gamma_0(H)$ and  $\lambda\in[0,1]$. We set
\begin{gather}\label{wLog1}
\mathfrak{L}_\lambda(f,g)= \int_{0}^{1} f\sharp_tg \,d\nu_\lambda(t),
\end{gather}
\begin{gather}\label{wLog2}
\mathbb{L}_\lambda(f,g)= \left(\int_{0}^{1} (f\nabla_tg)^*\,d\nu_\lambda(t)\right)^*.
\end{gather}
\end{definition}

\begin{remark}
When $\lambda=1/2$ then $\mathfrak{L}_{1/2}(f,g)=L(f,g)$ and $\mathbb{L}_{1/2}(f,g)=\mathbf{L}(f,g)$, where $L(f,g)$ and $\mathbf{L}(f,g)$ are defined by \eqref{logmean1}. For this reason, $\mathfrak{L}_\lambda(f,g)$ and $\mathbb{L}_\lambda(f,g)$ are both called here $\lambda$-weighted logarithmic functional mean of $f$ and $g$.
\end{remark}

Before stating some properties of $\mathfrak{L}_\lambda(f,g)$ and $\mathbb{L}_\lambda(f,g)$ we will analyze the novelty of these $\lambda$-weighted logarithmic functional means. First, as far as we know, $\mathfrak{L}_\lambda(f,g)$ and $\mathbb{L}_\lambda(f,g)$ appears to be new in their functional version. A question arises then from this latter information: do these functional means coincide in the scalar case? In fact, in the mono-dimensional case i.e when $H={\mathbb R}$, and choosing $f(x)=(1/2)ax^2$, $g(x)=(1/2)bx^2$ for some $a,b>0$, $\mathfrak{L}_\lambda(f,g)$ and $\mathbb{L}_\lambda(f,g)$ generate two $\lambda$-weighted logarithmic scalar means, respectively, given by
\begin{equation}\label{wlm1}
\mathfrak{L}_\lambda(a,b)= \int_{0}^{1} a\sharp_tb \,d\nu_\lambda(t),\;\; \mathbb{L}_\lambda(a,b)= \left(\int_{0}^{1} (a\nabla_tb)^{-1}\,d\nu_\lambda(t)\right)^{-1},
\end{equation}
where $a\sharp_tb:=a^{1-t}b^t$ is the weighted geometric mean and $a\nabla_tb:=(1-t)a+tb$ is the weighted arithmetic mean, of $a$ and $b$. Otherwise, another weighted logarithmic mean in the scalar version has been introduced in \cite{Pal et al(2016)} as follows
\begin{equation}\label{wlm2}
L_\lambda(a,b)=\dfrac{1}{\log a-\log b}\left(\dfrac{1-\lambda}{\lambda}\big(a-a^{1-\lambda}b^\lambda\big)+\dfrac{\lambda}{1-\lambda}\big(a^{1-\lambda}b^\lambda-b\big)\right),\; a\neq b.
\end{equation}
The functional version of $L_\lambda(a,b)$ has been investigated in \cite{RaiFur}.

Now, we are allowed to discuss the answer to the following question: are $\mathfrak{L}_\lambda(a,b)$, $\mathbb{L}_\lambda(a,b)$ and $L_\lambda(a,b)$, defined respectively by \eqref{wlm1} and \eqref{wlm2}, equal or different? First, we mention that if $\lambda=1/2$ then these three logarithmic means coincide and, they are all reduced to the standard logarithmic mean, namely
$$\mathfrak{L}_{1/2}(a,b)=\mathbb{L}_{1/2}(a,b)=L_{1/2}(a,b)=L(a,b):=\frac{b-a}{\log b-\log a},\;\; L(a,a)=a.$$
In the general case, they are mutually different as confirmed by the following example.

\begin{example}
Fixing $\lambda = 2/3$ and choosing various values for $a$ and $b$, we get, by performing calculations with Matlab software, the approximate values shown in Table \ref{table1} for the three logarithmic means.
 
\begin{table}[!h]
  \begin{tabular}{|c|c|c|c|c|}
  \hline
   a & b & $\mathfrak{L}_{2/3}(a,b)$ & $\mathbb{L}_{2/3}(a,b)$ & $L_{2/3}(a,b)$ \\ \hline
  2 & 4 & 3.232096013 & 3.225535716 & 3.228458409 \\ \hline
  1/2 & 3 & 1.843948110 & 1.827874186& 1.827005588 \\ \hline
  1/4 & 1/2 & 0.404012001 & 0.403191964 & 0.403557301 \\ \hline
  2 & 13 & 7.853396133 & 7.780949148 & 7.773936011 \\ \hline
 \end{tabular}
 \\
 \caption{Some numerical values for logarithmic means}\label{table1}
 \end{table}
 \end{example}
Now, we will state some more results about the previous weighted means.
\begin{proposition}\label{Estimate}
For any $f, g\in\Gamma_0(H)$ and $\lambda\in[0,1]$, we have
\begin{gather}\label{44f}
f!_\lambda g \leq \mathbb{L}_\lambda(f,g)\leq f\nabla_\lambda g,
\end{gather}
\begin{gather}\label{45f}
f!_\lambda\,g\le \big(\mathbb{L}_\lambda(f^*,g^*)\big)^* \leq  \mathfrak{L}_\lambda(f,g)\leq f\nabla_\lambda g.
\end{gather}
\end{proposition}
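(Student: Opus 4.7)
The approach is to derive both chains of inequalities from the weighted Hermite-Hadamard functional inequality \eqref{21} applied to the pointwise convex map $\Phi(h) = h^*$ (mentioned as a typical example of a pointwise convex map in the discussion preceding \eqref{3}), combined with two elementary facts from convex analysis: the order-reversing property of the Fenchel conjugation ($u \le v \Longrightarrow u^* \ge v^*$) and the Fenchel--Moreau biconjugate inequality $u^{**} \le u$, with equality when $u \in \Gamma_0(H)$. The endpoint cases $\lambda \in \{0,1\}$ are handled by convention through the usual extension of the means, so the substantive work is carried out for $\lambda \in (0,1)$.

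For \eqref{44f}, I would apply \eqref{21} with $\Phi(h) = h^*$ to the pair $(f, g)$, giving
\begin{gather*}
(f\nabla_\lambda g)^* \leq \int_0^1 (f\nabla_t g)^* \, d\nu_\lambda(t) \leq f^* \nabla_\lambda g^*.
\end{gather*}
Taking the Fenchel conjugate of all three members reverses the inequalities, and then reading off the resulting terms through the definitions \eqref{ArHaGe} and \eqref{wLog2} --- while using $f\nabla_\lambda g \in \Gamma_0(H)$ so that $(f\nabla_\lambda g)^{**} = f\nabla_\lambda g$ --- produces exactly $f!_\lambda g \leq \mathbb{L}_\lambda(f,g) \leq f\nabla_\lambda g$.

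For \eqref{45f}, I split the chain into three bounds. The extreme bounds $f!_\lambda g \leq (\mathbb{L}_\lambda(f^*, g^*))^* \leq f\nabla_\lambda g$ follow by applying the already-established \eqref{44f} to $f^*, g^* \in \Gamma_0(H)$ and conjugating (using $f^{**} = f$, $g^{**} = g$). For the central bound $(\mathbb{L}_\lambda(f^*, g^*))^* \leq \mathfrak{L}_\lambda(f, g)$, I would rewrite $\mathbb{L}_\lambda(f^*, g^*) = \bigl(\int_0^1 f!_t g \, d\nu_\lambda(t)\bigr)^*$ using \eqref{ArHaGe} and \eqref{wLog2}, conjugate once more, and then apply the one-sided Fenchel--Moreau inequality followed by the integrated form of $f!_t g \leq f\sharp_t g$ from \eqref{IAGH}. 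The remaining bound $\mathfrak{L}_\lambda(f,g) \leq f\nabla_\lambda g$ is obtained by integrating $f\sharp_t g \leq f\nabla_t g$ against $d\nu_\lambda(t)$ and evaluating the moments via Lemma~3.1 at $a = 0$, $b = 1$, which yield $\nu_\lambda([0,1]) = 1$ and $\int_0^1 t\, d\nu_\lambda(t) = \lambda$.

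The argument is essentially a careful bookkeeping of duality, so there is no serious obstacle; the only delicate point is that in the central bound of \eqref{45f} we can only invoke the one-sided inequality $u^{**} \le u$, since although each $f!_t g$ belongs to $\Gamma_0(H)$, the integral $\int_0^1 f!_t g\, d\nu_\lambda(t)$ is in general only convex and not guaranteed to be lower semi-continuous. Fortunately, this one-sided form is exactly what is needed, and the equality case is not required.
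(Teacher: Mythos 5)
Your proposal is correct and follows essentially the same route as the paper: \eqref{44f} by applying \eqref{21} to $\Phi(h)=h^*$ and conjugating, and \eqref{45f} by integrating $f!_tg\le f\sharp_tg\le f\nabla_tg$ against $d\nu_\lambda$ together with duality. Your explicit remark that only the one-sided inequality $u^{**}\le u$ is available for $\int_0^1 f!_tg\,d\nu_\lambda(t)$ is a point the paper leaves implicit, but it does not change the argument.
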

\begin{proof}
By applying \eqref{21} with the pointwise convex map  $f \mapsto f^*$, we get
\begin{equation}\label{x}
\big(f\nabla_\lambda g\big)^* \leq \int_0^1\big(f\nabla_t g\big)^*d\nu_\lambda(t)\leq f^*\nabla_\lambda g^*.
\end{equation}
If we recall that the map $f \mapsto f^*$ is pointwisely decreasing and satisfies $f^{**}\leq f$ for any $f$, we get \eqref{44f} after taking the conjugate of each term in \eqref{x}. Now, employing \eqref{IAGH} we get for any $t\in[0,1]$, $f!_t\,g\le f\sharp_t\,f\le f\nabla_t\,g$.
Multiplying all sides of these latter inequalities by $d\nu_\lambda(t)$ and then integrating with respect to $t \in [0, 1],$ we obtain
$$\Big(\mathbb{L}_\lambda(f^{*},g^{*})\Big)^*\leq \mathfrak{L}_\lambda(f,g)\leq \int_0^1 f\nabla_t\,g\;d\nu_\lambda(t).$$
Using the definition of $f\nabla_t\,g$ with \eqref{22} and then an elementary real-integration, we get the right inequality in \eqref{45f}.
The left inequality in \eqref{45f} is immediate from the definitions of the involved functional means with the help of some arguments as previous.
\end{proof}

\begin{proposition}\label{th2fenchel}
For any $a,\lambda\in[0,1]$ and $f, g\in \Gamma_0(H)$, the following inequalities hold
\begin{equation}\label{2.10f}
m(a,\lambda)\Big(f^*\nabla_a g^*-\big(f\nabla_a g\big)^*\Big)
\leq f^*\nabla_\lambda g^*-\Big({\mathbb L}_\lambda(f,g)\Big)^* 
\leq M(a,\lambda)\Big(f^*\nabla_ag^*-\big(f\nabla_ag\big)^*\Big),
\end{equation}
where $m(a,\lambda)$ and $M(a,\lambda)$ are the real numbers defined in Theorem \ref{th2}.
\end{proposition}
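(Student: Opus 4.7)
The plan is to apply Theorem \ref{th2} directly with $\Phi$ chosen as the Fenchel conjugation map $\Phi : f\longmapsto f^{*}$ on $\mathcal{C}=\Gamma_0(H)$. As already observed after equation \eqref{4}, this map is pointwise convex: the defining expression $f^{*}(x^{*})=\sup_{x\in H}\{\Re\langle x^{*},x\rangle - f(x)\}$ is a pointwise supremum of functionals that are affine in the argument $f$, hence convex in $f$ at every point. The convex cone $\Gamma_0(H)$ is itself a convex subset of $\widetilde{\mathbb{R}}^H$, so the hypotheses of Theorem \ref{th2} are satisfied.

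Substituting $\Phi(f)=f^{*}$, $\Phi(g)=g^{*}$ and $\Phi(f\nabla_{t}g)=(f\nabla_{t}g)^{*}$ into the chain \eqref{2.10}, I immediately obtain
\[
m(a,\lambda)\bigl(f^{*}\nabla_{a}g^{*}-(f\nabla_{a}g)^{*}\bigr) \leq f^{*}\nabla_{\lambda}g^{*}-\int_{0}^{1}(f\nabla_{t}g)^{*}\,d\nu_\lambda(t) \leq M(a,\lambda)\bigl(f^{*}\nabla_{a}g^{*}-(f\nabla_{a}g)^{*}\bigr).
\]
The remaining task is to recognize the middle integral as $\bigl(\mathbb{L}_\lambda(f,g)\bigr)^{*}$. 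By Definition \ref{defwl}, $\mathbb{L}_\lambda(f,g)$ is precisely the Fenchel conjugate of that integral. Since for each $t\in(0,1)$ the functional $(f\nabla_t g)^{*}$ belongs to $\Gamma_0(H)$ (any Fenchel conjugate of a proper functional is convex and lower semi-continuous), the $d\nu_\lambda$-integral also belongs to $\Gamma_0(H)$, pointwise convexity and lower semi-continuity being preserved by positive integration against a probability measure. The Fenchel--Moreau biconjugation theorem then yields
\[
\bigl(\mathbb{L}_\lambda(f,g)\bigr)^{*} = \left(\left(\int_{0}^{1}(f\nabla_{t}g)^{*}\,d\nu_\lambda(t)\right)^{*}\right)^{*} = \int_{0}^{1}(f\nabla_{t}g)^{*}\,d\nu_\lambda(t),
\]
which is exactly the identification needed to produce \eqref{2.10f}.

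There is no genuine obstacle: the result is a clean specialization of Theorem \ref{th2}. The only subtlety worth a brief mention in the write-up is the well-definedness of the integral $\int_{0}^{1}(f\nabla_{t}g)^{*}\,d\nu_\lambda(t)$, which is ensured by the pointwise continuity of $t\mapsto f\nabla_{t}g$ on $(0,1)$ noted at the beginning of Section \ref{sect4}, together with the fact that the Fenchel conjugation preserves the regularity needed for the $\nu_\lambda$-integral to make sense pointwise on $H$. The case where $a$ or $\lambda$ equals $0$ or $1$ is handled by the extension conventions recalled after \eqref{IAGH}, and produces trivial or limiting equalities.
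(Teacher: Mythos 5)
Your proposal is correct and follows essentially the same route as the paper: apply Theorem \ref{th2} with the pointwise convex conjugation map $\Phi(f)=f^{*}$ and identify the resulting integral with $\bigl(\mathbb{L}_\lambda(f,g)\bigr)^{*}$ via biconjugation ($h^{**}=h$ on $\Gamma_0(H)$). Your write-up merely makes explicit the regularity checks that the paper leaves implicit.
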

\begin{proof}
Applying \eqref{2.10} with the pointwise convex map $\Phi(f)=f^*$, with the definition of ${\mathbb L}_\lambda(f,g)$ and the fact that $f^{**}=f$ for any $f\in\Gamma_0(H)$, we get the desired inequalities.
\end{proof}

Taking $a=1/2$ in \eqref{2.10f}, we immediately obtain the following corollary.

\begin{corollary}\label{cor2f}
Let $\Phi:\mathcal{C}\rightarrow \widetilde{\mathbb{R}}^H$ be pointwise convex, $\lambda\in[0,1]$ and $f, g\in \mathcal{C}$, we have
\begin{gather}\label{3.11f}
\alpha(\lambda)\Big(f^*\nabla g^*-\big(f\nabla g\big)^*\Big)
\leq f^*\nabla_\lambda g^*-\Big({\mathbb L}_\lambda(f,g)\Big)^*
\leq \beta(\lambda)\Big(f^*\nabla g^*-\big(f\nabla g\big)^*\Big),
\end{gather}
where $\alpha(\lambda)$ and $\beta(\lambda)$ are the real numbers defined in Corollary \ref{cor1f}.
\end{corollary}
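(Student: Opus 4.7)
My plan is to recognize that Corollary \ref{cor2f} is a direct specialization of the preceding Proposition \ref{th2fenchel} to the midpoint value $a = 1/2$, and to verify the substitution carefully.

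First, I would take the inequalities \eqref{2.10f} and set $a = 1/2$ throughout. Under the convention already established in Section \ref{sect2} that $f\nabla_{1/2} g$ is written simply as $f\nabla g$, the inner expression $f^*\nabla_a g^* - (f\nabla_a g)^*$ appearing on both the inner and outer bounds collapses to $f^*\nabla g^* - (f\nabla g)^*$. Crucially, the middle term $f^*\nabla_\lambda g^* - \bigl({\mathbb L}_\lambda(f,g)\bigr)^*$ is untouched by this substitution since it involves the parameter $\lambda$ of the weighted logarithmic mean rather than the free parameter $a$.

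Second, I would invoke the notational shorthand introduced in Corollary \ref{cor1f}, namely $\alpha(\lambda) := m(1/2,\lambda)$ and $\beta(\lambda) := M(1/2,\lambda)$. Replacing the coefficients $m(a,\lambda)$ and $M(a,\lambda)$ in the specialized inequality by $\alpha(\lambda)$ and $\beta(\lambda)$ produces precisely \eqref{3.11f}.

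There is no substantive obstacle here: the argument is entirely by direct substitution into an already proven result, so no new analytic content is required. If one wanted to exhibit the coefficients explicitly one could plug $a=1/2$ into the formula $m(a,\lambda) = (1-\lambda)^2\frac{1-a^{\lambda/(1-\lambda)}}{1-a} + \lambda^2\frac{1-(1-a)^{(1-\lambda)/\lambda}}{a}$ and simplify, but this is not needed for the inequality itself. The only minor thing to note is that the hypothesis statement mentioning a pointwise convex map $\Phi$ appears vestigial for this corollary since the conclusion involves only Fenchel conjugates; the substantive hypothesis is $f,g \in \Gamma_0(H)$ inherited via Proposition \ref{th2fenchel}.
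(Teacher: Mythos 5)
Your proof is correct and matches the paper's own argument exactly: the paper also obtains \eqref{3.11f} by setting $a=1/2$ in \eqref{2.10f} and invoking the definitions $\alpha(\lambda)=m(1/2,\lambda)$, $\beta(\lambda)=M(1/2,\lambda)$. Your side remark that the hypothesis on $\Phi$ is vestigial here is also accurate, as the conclusion depends only on $f,g\in\Gamma_0(H)$ via Proposition \ref{th2fenchel}.
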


\end{document}